\newtheorem{thm}{Theorem}[section]
\newtheorem{prop}[thm]{Proposition}
\newtheorem{lemm}[thm]{Lemma}
\theoremstyle{definition}
\newcommand{\C}{\mathbb{C}}
\newcommand{\Q}{\mathbb{Q}}
\newcommand{\M}{\mathrm{M}}
\newcommand{\s}{\sigma}
\newcommand{\Irr}{\operatorname{Irr}}
\newcommand{\bs}{\boldsymbol}
\begin{document}
\title[A construction of pairs of association schemes]%
{A construction of pairs of non-commutative rank $8$
  association schemes from non-symmetric rank $3$ association schemes}
\author[A. Hanaki]{Akihide Hanaki}
\address[A. Hanaki]{Faculty of Science,
  Shinshu University, 3-1-1 Asahi,
  Matsumoto, 390-8621, Japan}
\email{hanaki@shinshu-u.ac.jp}

\author[M. Yoshikawa]{Masayoshi Yoshikawa}
\address[M. Yoshikawa]{Department of Mathematics,
  Hyogo University of Teacher Education, 942-1 Shimokume,
  Kato, Hyogo, 673-1494, Japan}
\email{myoshi@hyogo-u.ac.jp}

\keywords{character table; quaternion algebras; association schemes}

\subjclass[2010]{05E30} 
 
\thanks{Akihide Hanaki was supported by JSPS KAKENHI Grant Number JP17K05165.
  Masayoshi Yoshikawa was supported by JSPS KAKENHI Grant Number JP17K05173.}

\begin{abstract}
We construct a pair of non-commutative rank $8$ association schemes
from a rank $3$ non-symmetric association scheme.
For the pair, two association schemes have the same character table
but different Frobenius-Schur indicators.
This situation is similar to the pair of the dihedral group and the quaternion group of order $8$.
We also determine the structures of adjacency algebras of them over the rational number field.
\end{abstract}

\maketitle

\section{Introduction}
From a rank $3$ non-symmetric association scheme of order $n-1$,
we construct a pair of association schemes $(\mathcal{D}, \mathcal{Q})$ with the following properties.
\begin{itemize}
  \item $\mathcal{D}$ and $\mathcal{Q}$ are non-commutative, of order $4n$, and of rank $8$.
  \item $\mathcal{D}$ and $\mathcal{Q}$ have the identical character tables
  but their Frobenius-Schur indicators are different.
\end{itemize}
These properties are similar to the pair of the dihedral group $D_8$ and the quaternion group $Q_8$ of order $8$.

In theory of association schemes, many authors considered adjacency algebras over
the complex number field.
In this paper, we determine the structures of adjacency algebras of
$\mathcal{D}$ and $\mathcal{Q}$ over the rational number field $\Q$.
We prove 
\begin{eqnarray*}
  \mathbb{Q}\mathcal{D}&\cong& \mathbb{Q}\oplus\mathbb{Q}\oplus\mathbb{Q}\oplus\mathbb{Q}\oplus \M_2(\mathbb{Q}),\\
  \mathbb{Q}\mathcal{Q}&\cong& \mathbb{Q}\oplus\mathbb{Q}\oplus\mathbb{Q}\oplus\mathbb{Q}\oplus \mathbb{Q}(-1,-a),
\end{eqnarray*}
where $\M_2(\mathbb{Q})$ is the full matrix algebra of degree $2$ and $\mathbb{Q}(-1,-a)$
is the quaternion division algebra.

It is known that a rank $3$ non-symmetric association scheme of order $n-1$
exists if and only if there exists a skew-Hadamard matrix of order $n$ with $n\equiv 0 \pmod{4}$.
There is a conjecture that a skew-Hadamard matrix of order $n$ exists for an arbitrary $n\equiv 0\pmod{4}$.

\section{Preliminaries}
For a field $K$, we denote by $\M_n(K)$ the full matrix algebra of degree $n$ over $K$.
For a matrix $M$, the transposed matrix of $M$ will be denoted by ${}^tM$.
By $I_n$, we denote the identity matrix of degree $n$.
By $J_n$, we denote the $n\times n$ matrix with all entries $1$.

We define an association scheme in matrix form.
Let $\mathcal{S}=\{A_0,\dots,A_d\}$ be a set of non-zero $n\times n$ matrices with entries in $\{0,1\}$.
Then the set $\mathcal{S}$ is called an \emph{association scheme} of \emph{order} $n$ and \emph{rank} $d+1$ if
\begin{enumerate}
  \item $A_0=I_n$,
  \item $\sum_{i=0}^d A_i=J_n$, and
  \item for any $0\leq i, j\leq d$, ${}^tA_i$ and $A_iA_j$ are linear combinations of
  $A_0,\dots,A_d$.
\end{enumerate}
By definition, all rows of $A_i$ contain the same number of $1$.
We call this number the \emph{valency} of $A_i$ and denote it by $n_i$.
The association scheme $\mathcal{S}$ is said to be \emph{symmetric}
if ${}^tA_i=A_i$ for all $0\leq i\leq d$
and \emph{non-symmetric} otherwise.
The association scheme $\mathcal{S}$ is said to be \emph{commutative}
if $A_iA_j=A_jA_i$ for all $0\leq i, j\leq d$
and \emph{non-commutative} otherwise.
For a field $K$, $K\mathcal{S}:=\bigoplus_{i=0}^d KA_i$ is a $(d+1)$-dimensional $K$-algebra
by the condition (3).
We call $K\mathcal{S}$ the \emph{adjacency algebra} of $\mathcal{S}$ over $K$.
It is known that $K\mathcal{S}$ is semisimple if the characteristic of $K$ is $0$
\cite[Theorem 4.1.3 (ii)]{Zi}.
A \emph{representation} of $\mathcal{S}$ over $K$ means a $K$-algebra homomorphism
from $K\mathcal{S}$ to a full matrix algebra $\M_d(K)$ for some positive integer $d$.

Now we consider the adjacency algebra over the complex number field $\C$.
By Wedderburn's theorem \cite[3.5 Theorem]{Pierce},
$\C\mathcal{S}\cong \bigoplus_{i=1}^\ell \M_{d_i}(\C)$ for some $d_1,\dots,d_\ell$.
The set of projections $\C \mathcal{S}\to \M_{d_i}(\C)$ ($i=1,\dots,\ell$)
is a complete set of representatives of equivalence classes of irreducible representations of $\C\mathcal{S}$.
The matrix trace of a representation is called the \emph{character} of the representation.
By $\Irr(\mathcal{S})=\{\chi_1,\dots,\chi_\ell\}$, we denote the set of irreducible characters of $\C\mathcal{S}$.
The $\ell \times (d+1)$ matrix $(\chi_i(A_j))$
is called the \emph{character table} of $\mathcal{S}$.
Since $\C\mathcal{S}$ is defined as a matrix algebra, the map $A_i\mapsto A_i$ is a representation.
We call this the \emph{standard representation}
and its character the \emph{standard character} of $\mathcal{S}$.
Write the irreducible decomposition of the standard character by $\sum_{\chi\in\Irr(\mathcal{S})}m_\chi\chi$.
We call the coefficient $m_\chi$ the \emph{multiplicity} of $\chi$. 

The \emph{Frobenius-Schur indicator} $\nu(\chi)$ of $\chi\in\Irr(\mathcal{S})$ is defined by
$$\nu(\chi) := \frac{m_\chi}{n \chi(A_0)} \sum_{i=0}^d \frac{1}{n_i} \chi({A_i}^2).$$
An irreducible character is said to be of the \emph{first kind}
if it is afforded by a real representation, 
of the \emph{second kind} if it is afforded by a representation
which is equivalent to its complex conjugate but is not of the first kind,
and of the \emph{third kind} if it is not of the first or second kind.
The following theorem is known.

\begin{thm}\cite[(7,5), (7.6)]{Higman1975}\label{thm:FS}
  \begin{enumerate}
    \item For $\chi\in\Irr(\mathcal{S})$,
    $$\nu(\chi) = \begin{cases}
      1 & \text{if $\chi$ is of the first kind,} \\
      -1 & \text{if $\chi$ is of the second kind,} \\
      0 & \text{if $\chi$ is of the third kind.}\end{cases}$$
     \item $\sum_{\chi\in\Irr(\mathcal{S})}\nu(\chi)\chi(A_0)=\sharp\{A_i\in \mathcal{S}\mid {}^tA_i=A_i\}$.
  \end{enumerate}
\end{thm}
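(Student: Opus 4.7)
The plan is to adapt the classical Frobenius--Schur argument for finite groups to the anti-involutive algebra $(\C\mathcal{S},\sigma)$, where $\sigma$ is the $\C$-linear anti-involution extending $A_i\mapsto {}^tA_i$. For an irreducible representation $\rho$ of $\C\mathcal{S}$ on a space $V$ with character $\chi$, I would first study invariant bilinear forms $B\colon V\times V\to\C$ satisfying $B(\rho(a)v,w)=B(v,\rho(\sigma(a))w)$. By Schur's lemma applied to $\rho$ and its $\sigma$-dual, the space of such forms has dimension at most one, and a nonzero form is either symmetric or skew. Since each $A_i$ has real entries, complex conjugation of representations coincides with $\sigma$-duality at the level of equivalence classes, so the first, second, and third kind correspond respectively to an invariant symmetric form, an invariant skew form, or none.

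The next step is to match this trichotomy to the formula defining $\nu(\chi)$. I would consider the element $T=\sum_{i=0}^d \frac{1}{n_i}A_i\otimes A_i$ acting on $V\otimes V$ via $\rho\otimes\rho$ and identify its trace on the invariant line. The swap operator on $V\otimes V$ distinguishes symmetric from skew and so produces the sign $\pm 1$, while the absence of any invariant line produces $0$. The specific weights $1/n_i$ and the normalizing factor $m_\chi/(n\chi(A_0))$ are forced by the second orthogonality relations for the character table, which rescale the trace on $V\otimes V$ precisely so that the final scalar is one of $-1$, $0$, $1$.

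Part (2) then follows from a direct aggregation. Multiplying the defining formula by $\chi(A_0)$ and summing over $\chi$ yields
\begin{equation*}
  \sum_{\chi}\nu(\chi)\chi(A_0)
  \;=\; \frac{1}{n}\sum_{i=0}^{d}\frac{1}{n_i}\sum_{\chi}m_\chi\chi(A_i^2)
  \;=\; \frac{1}{n}\sum_{i=0}^{d}\frac{1}{n_i}\operatorname{tr}(A_i^2),
\end{equation*}
using that the standard character equals $\sum_\chi m_\chi\chi$. Now $\operatorname{tr}(A_i^2)=\sum_{j,k}(A_i)_{jk}(A_i)_{kj}$ is the Frobenius pairing $\langle A_i,{}^tA_i\rangle$. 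Because distinct basis matrices of $\mathcal{S}$ have disjoint $0/1$ supports, this pairing is $n\cdot n_i$ when ${}^tA_i=A_i$ and $0$ otherwise, and the displayed sum collapses to $\#\{A_i:{}^tA_i=A_i\}$.

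The main obstacle is the second step of Part (1)---calibrating the weighted squaring element so that $\rho$ sends it to precisely the right multiple of a projector onto the bilinear-form invariants in $V\otimes V$. In the group case one exploits the centrality of $\sum_g g\otimes g$; here centrality is less transparent, and one must invoke the orthogonality relations both to justify the particular weights $1/n_i$ and to fix the overall sign ensuring that the scalar output lies in $\{-1,0,1\}$.
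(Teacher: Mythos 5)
The paper offers no proof of this theorem at all---it is quoted verbatim from Higman's 1975 paper on coherent configurations---so there is nothing internal to compare your attempt against; I can only judge it against the standard argument. Your part (2) is complete and correct as written: it is a direct computation from the definition of $\nu$, using that the standard character equals $\sum_{\chi}m_\chi\chi$, that ${}^tA_i$ is again one of the basis matrices, and that distinct basis matrices have disjoint $0/1$ supports, so that $\operatorname{tr}(A_i^2)$ equals $n\,n_i$ when ${}^tA_i=A_i$ and $0$ otherwise. It is worth stating explicitly that part (2) uses nothing from part (1); it holds for any function $\nu$ defined by that formula.

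Part (1) is where the content of the theorem lies, and there you have the right skeleton but an acknowledged hole at exactly the decisive step, so as it stands the trichotomy $\nu(\chi)\in\{1,-1,0\}$ is asserted rather than proved. Two ingredients are missing. First, the identification of ``first kind'' (realizable over $\R$) with ``admits a nonzero invariant symmetric bilinear form'' and of ``second kind'' with ``admits a nonzero invariant alternating form'' requires that every irreducible representation of $\C\mathcal{S}$ be unitarizable; this holds because $\C\mathcal{S}$ is closed under conjugate transpose inside $\M_n(\C)$, but you only gesture at it. Second, the calibration you flag as the main obstacle is precisely the symmetric-algebra version of the Gasch\"utz averaging: with respect to the symmetrizing form $\lambda(a)=\frac{1}{n}\operatorname{tr}(a)$ the dual basis of $\{A_i\}$ is $\{\frac{1}{n_i}\,{}^tA_i\}$, and since $n_i=n_{i'}$ the element $\sum_i\frac{1}{n_i}A_i\otimes A_i$ is exactly $\sum_i b_i\otimes\sigma(b_i^\vee)$ for these dual bases. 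The Casimir identity $\sum_i b_i a\otimes b_i^\vee=\sum_i b_i\otimes a b_i^\vee$ then shows that $X\mapsto\sum_i\frac{1}{n_i}\,{}^t\rho(A_i)\,X\,\rho(A_i)$ carries every matrix to the matrix of a $\sigma$-invariant bilinear form; combining this with Schur's lemma, the value $n/m_\chi$ of the Schur element (equivalently, the orthogonality relations), and the identity $\operatorname{tr}(\rho(A_i)^2)=\operatorname{tr}\bigl((\rho(A_i)\otimes\rho(A_i))P\bigr)$ for the flip $P$ yields that $\nu(\chi)$ equals the dimension of the space of invariant symmetric forms minus the dimension of the space of invariant alternating forms, which is $1$, $-1$, or $0$ by the one-dimensionality you already established. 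This is the substance of Higman's (7.5); until it is written out, your proof of part (1) is an outline, not a proof.
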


\section{Construction}
Let $\{A_0=I, A_1, A_2= {}^tA_1\}$ be a non-symmetric rank $3$ association scheme of order $n-1$.
Remark that $n\equiv 0\pmod{4}$ to exist such an association scheme.
We will construct a pair $(\mathcal{D}, \mathcal{Q})$ of
association schemes with the properties  described in Introduction.

Set $a := n-1$ and $b:=(n-2)/2$.

The following lemma is well-known.

\begin{lemm}\label{lem3.1}
  \begin{enumerate}
    \item $A_1^2=\frac{b-1}{2}A_1+\frac{b+1}{2}A_2$.
    \item $A_2^2=\frac{b+1}{2}A_1+\frac{b-1}{2}A_2$.
    \item $A_1A_2=A_2A_1=bA_0+\frac{b-1}{2}A_1+\frac{b-1}{2}A_2$.
  \end{enumerate}
\end{lemm}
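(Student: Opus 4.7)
The plan is to unpack the three axioms of an association scheme in the rank-$3$ non-symmetric case $\{I_{n-1}, A_1, A_2\}$ with $A_2 = {}^tA_1$, and then solve linearly for the intersection numbers $p_{ij}^k$ defined by $A_iA_j = \sum_k p_{ij}^k A_k$ for $i,j\in\{1,2\}$.

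I would first pin down the valencies. Since $A_0 + A_1 + A_2 = J_{n-1}$ and transposition preserves row sums, $n_1 = n_2$, so $n_1 = n_2 = (n-2)/2 = b$. Next I would compute the constant terms by diagonal inspection: $(A_1A_2)_{ii} = \sum_j (A_1)_{ij}^2 = n_1 = b$, giving $p_{12}^0 = p_{21}^0 = b$, whereas $(A_1^2)_{ii} = \sum_j (A_1)_{ij}(A_1)_{ji} = 0$, because a common $1$ would place $(i,j)$ in both $A_1$ and $A_2 = {}^tA_1$; hence $p_{11}^0 = p_{22}^0 = 0$.

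To extract the remaining coefficients I would combine two ingredients. First, the identity $A_1(A_1+A_2) = A_1(J_{n-1} - I_{n-1}) = bA_0 + (b-1)A_1 + bA_2$ (and its analogue for $A_2$) pins down the sums $p_{11}^k + p_{12}^k$ for each $k$. Second, the transposition symmetries $(A_1A_2)^t = A_1A_2$ and $(A_1^2)^t = A_2^2$ force $p_{12}^1 = p_{12}^2$ and swap the $A_1,A_2$ coefficients between $A_1^2$ and $A_2^2$. The first symmetry, together with the row-sum constraint $p_{12}^1+p_{12}^2 = b-1$ derived from multiplying $A_1A_2$ by $J$ and using $p_{12}^0 = b$, yields $p_{12}^1 = p_{12}^2 = (b-1)/2$, which is part (3). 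Back-substituting into the $A_1(J-I)$ relation produces $p_{11}^1 = (b-1)/2$ and $p_{11}^2 = (b+1)/2$, giving part (1); part (2) then follows by transposition, and $A_2A_1 = {}^t(A_1A_2) = A_1A_2$ settles the commutativity assertion in (3).

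The only real subtlety is that $(b\pm 1)/2$ must be integral, which forces $b$ to be odd: this is guaranteed by $n\equiv 0\pmod{4}$, precisely the parity condition recalled just before the lemma, and it is the arithmetic reason non-symmetric rank-$3$ schemes exist only in that case. No serious obstacle is anticipated; the whole argument is elementary linear algebra inside the three-dimensional adjacency algebra.
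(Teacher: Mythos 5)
The paper gives no proof of this lemma (it is dismissed as ``well-known''), so your derivation from the axioms is a genuine filling-in rather than a reproduction; and almost all of it is correct. The valency computation, the diagonal inspection giving $p_{11}^0=p_{22}^0=0$ and $p_{12}^0=b$, the relation $A_1(J_{n-1}-I_{n-1})=bA_0+(b-1)A_1+bA_2$, the symmetry of $A_1A_2$ forcing $p_{12}^1=p_{12}^2$, the row-sum identity $p_{12}^1+p_{12}^2=b-1$, and the back-substitution yielding $p_{11}^1=\frac{b-1}{2}$, $p_{11}^2=\frac{b+1}{2}$ are all sound, as is obtaining (2) from (1) by transposition and the parity remark that $n\equiv 0\pmod 4$ makes $b$ odd.

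The one step that fails as written is the final commutativity claim ``$A_2A_1={}^t(A_1A_2)=A_1A_2$.'' Since ${}^tA_1=A_2$ and ${}^tA_2=A_1$, you have ${}^t(A_1A_2)={}^tA_2\,{}^tA_1=A_1A_2$: the transpose of $A_1A_2$ is $A_1A_2$ itself, not $A_2A_1$, so this identity only re-derives the symmetry of $A_1A_2$ that you already used and says nothing about $A_2A_1$. (Likewise ${}^t(A_2A_1)=A_2A_1$.) The fix is easy: since $A_1J=n_1J=JA_1$, the matrix $A_1+A_2=J-I$ commutes with $A_1$, so $A_1^2+A_1A_2=A_1^2+A_2A_1$ and hence $A_1A_2=A_2A_1$; alternatively, run your back-substitution once more on $A_2(J-I)$ using the coefficients of $A_2^2$ from part (2) to compute $A_2A_1$ directly and observe it agrees with $A_1A_2$. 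With that repair the proof is complete.
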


Set $x:=(1,2,3,4)$, $y:=(1,2)(3,4)$, permutations of degree $4$,
and $G:=\langle x, y\rangle\cong D_8$.
We identify elements of $G$ with the corresponding permutation matrices,
namely
$$x = \left(\begin{array}{cccc}0&1&0&0\\0&0&1&0\\0&0&0&1\\1&0&0&0\end{array}\right),
\quad
y = \left(\begin{array}{cccc}0&1&0&0\\1&0&0&0\\0&0&0&1\\0&0&1&0\end{array}\right),$$
and so on.
The next lemma is clear by definition.

\begin{lemm}\label{lem3.2}
  As matrices, $1+x^2=xy+x^3y$.
\end{lemm}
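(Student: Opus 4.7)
The identity asserted is between concrete $4\times 4$ $(0,1)$-matrices, and the proof is essentially a direct entrywise verification; I expect there to be no real obstacle beyond keeping track of a convention for permutation multiplication. The plan is to compute the permutations represented by the matrices $x^2$, $xy$, and $x^3 y$, and then check that both sides of the stated equality have a $1$ in the same eight positions.

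First I would compute $x^2$. Since $x$ is the $4$-cycle $(1,2,3,4)$, we get $x^2 = (1,3)(2,4)$, so $I_4 + x^2$ has a $1$ in position $(i,j)$ exactly when $j \equiv i \pmod 2$; that is, at the four diagonal entries and at $(1,3), (3,1), (2,4), (4,2)$. Next I would compute $xy$ and $x^3 y$, either by multiplying the displayed matrices directly or by working at the level of permutations in $D_8$. With the convention $(P_\sigma)_{i,\sigma(i)} = 1$ and $P_\sigma P_\tau = P_{\tau\sigma}$, one finds that $xy$ corresponds to the transposition $(2,4)$ (fixing $1$ and $3$) and $x^3 y$ to $(1,3)$ (fixing $2$ and $4$). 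Hence $xy$ contributes $1$s at $(1,1), (3,3), (2,4), (4,2)$ and $x^3 y$ contributes $1$s at $(1,3), (2,2), (3,1), (4,4)$.

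Taking the union of these two sets of positions and comparing with the $1$-support of $I_4 + x^2$ listed above shows that both sides of the claimed identity have their $1$s in exactly the same eight positions. As the authors remark, the lemma is ``clear by definition,'' so the entire content of the proof is this small entry-by-entry check; there is no genuine obstacle, and the only thing to watch for is applying the chosen permutation-matrix convention consistently throughout.
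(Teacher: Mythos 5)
Your verification is correct and matches the paper's (implicit) approach: the paper offers no proof beyond "clear by definition," and the intended argument is exactly the direct entrywise check you carry out. Your computed supports ($xy=(2,4)$, $x^3y=(1,3)$, $x^2=(1,3)(2,4)$) are all consistent with the displayed permutation matrices, and the two sides indeed agree position by position.
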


We keep the above notations in the rest of this article.

\subsection{The association scheme $\mathcal{D}$}
We define $n\times n$ matrices by
$$E:=\left(\begin{array}{c|ccc}
  0&1&\dots&1\\ \hline 1&&&\\ \vdots&&O&\\ 1&&&
  \end{array}\right),\quad
  \tilde{A}_i:=\left(\begin{array}{c|ccc}
  0&0&\dots&0\\ \hline 0&&&\\ \vdots&&A_i&\\ 0&&&
  \end{array}\right)\quad(i=1,2).$$

By Lemma \ref{lem3.1} and direct calculations, we have the following lemma.

\begin{lemm}\label{lem3.3}
  \begin{enumerate}
    \item $E^2+\tilde{A}_1\tilde{A}_2+\tilde{A}_2\tilde{A}_1=aI_n+b\tilde{A}_1+b\tilde{A}_2$.
    \item $\tilde{A}_1^2+\tilde{A}_2^2=b\tilde{A}_1+b\tilde{A}_2$.
    \item $E\tilde{A}_1+\tilde{A}_2E=E\tilde{A}_2+\tilde{A}_1E=bE$.
  \end{enumerate}
\end{lemm}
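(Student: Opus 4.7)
The plan is to verify each identity by direct block multiplication using the two-block decomposition
\[
E = \begin{pmatrix} 0 & \mathbf{1}^t \\ \mathbf{1} & O \end{pmatrix}, \qquad \tilde{A}_i = \begin{pmatrix} 0 & 0 \\ 0 & A_i \end{pmatrix},
\]
where $\mathbf{1}$ denotes the all-ones column vector of length $a = n - 1$ and $O$ is the $a \times a$ zero block. Since each $\tilde{A}_i$ has only one nonzero block, any product $\tilde{A}_i \tilde{A}_j$ simply carries $A_i A_j$ in the lower-right corner, so part (2) follows at once by adding the two expressions in Lemma \ref{lem3.1}(1)--(2) and noting that $\tfrac{b-1}{2} + \tfrac{b+1}{2} = b$ in each slot.

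For part (3), I would first record that the valencies satisfy $n_1 = n_2 = b$. The equality $n_1 = n_2$ is forced by $A_2 = {}^tA_1$, and applying a row sum to Lemma \ref{lem3.1}(1) gives $n_1^2 = \tfrac{b-1}{2} n_1 + \tfrac{b+1}{2} n_2 = b n_1$, whence $n_1 = b$. Consequently $A_i \mathbf{1} = b \mathbf{1}$ and $\mathbf{1}^t A_i = b \mathbf{1}^t$ for $i = 1, 2$, and direct block multiplication yields
\[
E \tilde{A}_i = \begin{pmatrix} 0 & b \mathbf{1}^t \\ 0 & 0 \end{pmatrix}, \qquad \tilde{A}_j E = \begin{pmatrix} 0 & 0 \\ b \mathbf{1} & 0 \end{pmatrix},
\]
so $E \tilde{A}_i + \tilde{A}_j E = b E$ for any $i, j \in \{1, 2\}$, settling both equalities in (3) at once.

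Part (1) is the only step requiring a little arithmetic. Block multiplication gives $E^2 = \bigl(\begin{smallmatrix} a & 0 \\ 0 & J_a \end{smallmatrix}\bigr)$, so the left-hand side has $(0,0)$-entry $a$ and lower-right block $J_a + A_1 A_2 + A_2 A_1$. Substituting $J_a = I_a + A_1 + A_2$ from axiom (2) of an association scheme, together with $A_1 A_2 = A_2 A_1 = b I_a + \tfrac{b-1}{2} A_1 + \tfrac{b-1}{2} A_2$ from Lemma \ref{lem3.1}(3), reduces this block to $(2b + 1) I_a + b A_1 + b A_2$; the numerical identity $2b + 1 = n - 1 = a$ (immediate from $b = (n-2)/2$) then matches it exactly with the lower-right block $a I_a + b A_1 + b A_2$ of $a I_n + b \tilde{A}_1 + b \tilde{A}_2$. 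There is no real obstacle; the one point to watch is that the $I_a$ contribution in the lower-right block comes from two sources (the expansion of $J_a$ and the two products $A_1 A_2, A_2 A_1$), and one must check their combined coefficient is $a$ rather than, say, $n$.
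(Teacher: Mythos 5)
Your proof is correct and follows exactly the route the paper intends: the paper's "proof" is just the phrase "By Lemma \ref{lem3.1} and direct calculations," and your blockwise computation (using $\mathbf{1}^t\mathbf{1}=a$, $A_i\mathbf{1}=b\mathbf{1}$, $J_a=I_a+A_1+A_2$, and $2b+1=a$) is precisely that calculation carried out in full.
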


We consider a subgroup $H:=C_G(y)=\{1,x^2,y,x^2y\}\cong C_2\times C_2$ of $G$.
It is easy to see that $\sum_{h\in H}h=\sum_{g\in G\setminus H}g =J_4$.
We define $4n\times 4n$ matrices by
$$\sigma_h:=I_n\otimes h \quad \text{for $h\in H$}$$
and
\begin{eqnarray*}
  \mu_g&:=&E\otimes g+\tilde{A_1}\otimes gy+\tilde{A_2}\otimes gx^2y\\
       &=&(I_n\otimes g)(E\otimes 1+\tilde{A_1}\otimes y+\tilde{A_2}\otimes x^2y) \\
       &=&(E\otimes 1+\tilde{A_1}\otimes x^2y+\tilde{A_2}\otimes y)(I_n\otimes g) \quad
           \text{for $g\in G\setminus H$.}
\end{eqnarray*}
We will show that $\mathcal{D}:=\{\sigma_h\mid h\in H\}\cup \{\mu_g\mid g\in G\setminus H\}$
forms an association scheme.

\begin{lemm}\label{lem3.4}
  The set $\mathcal{D}$ is closed by the transposition and
  $\sum_{h\in H}\sigma_h+\sum_{g\in G\setminus H}\mu_g=J_{4n}$.
\end{lemm}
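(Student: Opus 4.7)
The plan is to split the lemma into the two assertions and handle both by direct computation using the definitions of $\sigma_h$ and $\mu_g$, the relations ${}^tA_1=A_2$ and $A_0+A_1+A_2=J_{n-1}$ coming from the rank $3$ scheme, and the relations of $D_8$.

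For closure under transposition, every element of $H=\{1,x^2,y,x^2y\}$ is an involution or the identity, so ${}^t\sigma_h=I_n\otimes h^{-1}=\sigma_h\in\mathcal{D}$. For $g\in G\setminus H$, the block form of the matrices gives $E^{t}=E$ and ${}^t\tilde A_1=\tilde A_2$; combining this with $y^{-1}=y$ and $(x^2y)^{-1}=x^2y$ yields
\[{}^t\mu_g=E\otimes g^{-1}+\tilde A_1\otimes yx^2g^{-1}+\tilde A_2\otimes yg^{-1}.\]
To match this with $\mu_{g^{-1}}$ one needs the identity $yg'=g'x^2y$ for every $g'\in G\setminus H$, applied to $g'=g^{-1}$ (the second tensor factor in the other term then follows automatically from centrality of $x^2$ and $x^4=1$). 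This identity is immediate from $yxy=x^{-1}=x\cdot x^2$ together with $x^2\in Z(D_8)$, and is quickly verified on the four elements $\{x,x^3,xy,x^3y\}$ of $G\setminus H$. Since $H$ has index $2$ in $G$, $G\setminus H$ is closed under inversion, so $\mu_{g^{-1}}\in\mathcal{D}$ and $\mathcal{D}$ is closed under transposition.

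For the sum identity, $\sum_{h\in H}\sigma_h=I_n\otimes\sum_{h\in H}h=I_n\otimes J_4$ by the observation recorded just before the lemma. Because $y,x^2y\in H$ and $[G:H]=2$, right multiplication by $y$ or by $x^2y$ permutes the coset $G\setminus H$, so as $g$ runs over $G\setminus H$ so do $gy$ and $gx^2y$; hence
\[\sum_{g\in G\setminus H}\mu_g=(E+\tilde A_1+\tilde A_2)\otimes\sum_{g\in G\setminus H}g=(E+\tilde A_1+\tilde A_2)\otimes J_4.\]
Reading off the block decomposition, $E$ contributes the first row and column of $1$'s, while $\tilde A_1+\tilde A_2$ fills the $(n-1)\times(n-1)$ interior with $A_1+A_2=J_{n-1}-I_{n-1}$; hence $E+\tilde A_1+\tilde A_2=J_n-I_n$. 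Adding the two sums gives $I_n\otimes J_4+(J_n-I_n)\otimes J_4=J_n\otimes J_4=J_{4n}$.

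The only mildly delicate step is the group identity $yg'=g'x^2y$ on $G\setminus H$; everything else is bookkeeping, driven by the block structure of $E,\tilde A_i$ and the index-$2$ coset structure of $H$ in $G$. I expect to dispose of this identity in a single line from $yxy=x^{-1}$ and $x^2\in Z(D_8)$ (or equivalently the commutator relation $[g',y]=x^2$), which collapses the whole lemma to a short direct verification.
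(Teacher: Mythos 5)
Your proof is correct and follows essentially the same route as the paper's: a direct verification that ${}^t\sigma_h=\sigma_h$, that ${}^t\mu_g=\mu_{g^{-1}}$ (the paper simply records ${}^t\mu_{xy}=\mu_{xy}$, ${}^t\mu_{x^3y}=\mu_{x^3y}$, ${}^t\mu_x=\mu_{x^3}$, which your identity $yg'=g'x^2y$ on $G\setminus H$ reproduces), and that the sum collapses to $I_n\otimes J_4+(J_n-I_n)\otimes J_4=J_{4n}$. Your write-up is just a more explicit version of the paper's computation.
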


\begin{proof}
  We have
  ${}^t\sigma_h=\sigma_h$ for $h\in H$,
  ${}^t\mu_{xy}=\mu_{xy}$,
  ${}^t\mu_{x^3y}=\mu_{x^3y}$
  and ${}^t\mu_{x}=\mu_{x^3}$.
  Thus $\mathcal{D}$ is closed by transposition.

  Since $\sum_{h\in H}h=\sum_{g\in G\setminus H}g=J_4$ and $E+\tilde{A}_1+\tilde{A}_2=J_n-I_n$,
  we have $\sum_{h\in H}\sigma_h+\sum_{g\in G\setminus H}\mu_g=J_{4n}$.
\end{proof}

\begin{lemm}\label{lem3.5}
  \begin{enumerate}
    \item $\sigma_h\sigma_{h'}=\sigma_{hh'}$ for $h, h'\in H$.
    \item $\sigma_h\mu_g=\mu_{hg}$ and $\mu_g\sigma_h=\mu_{gh}$ for $h\in H$ and $g\in G\setminus H$.
    \item $\mu_g\mu_{g'}=a\sigma_{gg'}+b\mu_{gg'x}+b\mu_{gg'x^3}$ for $g, g'\in G\setminus H$.
  \end{enumerate}
\end{lemm}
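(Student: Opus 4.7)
The approach handles the three parts in sequence. Part (1) is immediate from $\sigma_h\sigma_{h'}=(I_n\otimes h)(I_n\otimes h')=I_n\otimes hh'=\sigma_{hh'}$. For part (2), I exploit the two factorizations of $\mu_g$ given in its definition: multiplying $\mu_g=(I_n\otimes g)(E\otimes 1+\tilde{A}_1\otimes y+\tilde{A}_2\otimes x^2y)$ on the left by $\sigma_h=I_n\otimes h$ absorbs to $(I_n\otimes hg)$ times the same middle factor, which is $\mu_{hg}$ since $hg\in G\setminus H$; the identity $\mu_g\sigma_h=\mu_{gh}$ is symmetric, using the other factorization.

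Part (3) is the main content. The plan is to compute a single $\mu_{g_0}^2$ and propagate by (2). Take $g_0:=xy$, which satisfies $g_0^2=1$. Since $H$ has index $2$ in $G$, any $g,g'\in G\setminus H$ can be written as $g=h_1g_0$ and $g'=g_0h_2$ with $h_1,h_2\in H$, so part (2) yields $\mu_g\mu_{g'}=\sigma_{h_1}\mu_{g_0}^2\sigma_{h_2}$. Direct expansion of
\[
\mu_{xy}^2=(E\otimes xy+\tilde{A}_1\otimes x+\tilde{A}_2\otimes x^3)^2
\]
produces nine summands; collecting by the $D_8$-element in the second tensor factor gives the four combinations $E^2+\tilde{A}_1\tilde{A}_2+\tilde{A}_2\tilde{A}_1$, $\tilde{A}_1^2+\tilde{A}_2^2$, $E\tilde{A}_1+\tilde{A}_2E$, $E\tilde{A}_2+\tilde{A}_1E$ attached to $1, x^2, y, x^2y$ respectively, which Lemma \ref{lem3.3} simplifies to $aI_n+b(\tilde{A}_1+\tilde{A}_2)$, $b(\tilde{A}_1+\tilde{A}_2)$, $bE$, $bE$. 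The sum becomes
\[
\mu_{xy}^2=aI_n\otimes 1+b(\tilde{A}_1+\tilde{A}_2)\otimes(1+x^2)+bE\otimes(y+x^2y).
\]
Lemma \ref{lem3.2}, together with its right-$y$ multiple $y+x^2y=x+x^3$, rewrites this as $a\sigma_1+b\mu_x+b\mu_{x^3}$, the asserted formula for $g=g'=xy$.

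To finish, sandwich by $\sigma_{h_1}$ and $\sigma_{h_2}$ and apply part (2): this gives $\mu_g\mu_{g'}=a\sigma_{h_1h_2}+b\mu_{h_1xh_2}+b\mu_{h_1x^3h_2}$. Since $g_0^2=1$ we have $h_1h_2=gg'$, and since $\langle x\rangle$ is normal in $D_8$ we have $h_2^{-1}xh_2\in\{x,x^3\}$, so the pair $\{h_1xh_2,h_1x^3h_2\}$ equals $\{gg'x,gg'x^3\}$ as a set; the claimed identity follows. The main obstacle is this last bookkeeping step: one must confirm that conjugation by $H$ stabilizes $\{x,x^3\}$ so that the two $b\mu$-summands match on each side regardless of which $h_2$ occurs. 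Everything else is a direct application of Lemmas \ref{lem3.3} and \ref{lem3.2}.
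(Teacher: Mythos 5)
Your proof is correct. Parts (1) and (2) are handled exactly as the paper does (the paper simply calls them ``easy''), and your part (3) reaches the right conclusion, but you organize the work differently. The paper keeps $g,g'$ general throughout: it writes $\mu_g\mu_{g'}=(I_n\otimes g)\,P_L\,P_R\,(I_n\otimes g')$ using the two one-sided factorizations with middle factors $P_L=E\otimes 1+\tilde{A}_1\otimes y+\tilde{A}_2\otimes x^2y$ and $P_R=E\otimes 1+\tilde{A}_1\otimes x^2y+\tilde{A}_2\otimes y$, expands $P_LP_R$ via Lemmas \ref{lem3.3} and \ref{lem3.2}, and then pushes $g$ and $g'$ through using the commutation relations $g'x^2=x^2g'$ and $yg'=g'x^2y$, landing directly on $a\sigma_{gg'}+b\mu_{gg'x}+b\mu_{gg'x^3}$ with no case analysis. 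You instead compute the single square $\mu_{xy}^2$ (which, unwound, is the same nine-term expansion of $P_LP_R$ conjugated by $I_n\otimes xy$) and then propagate to general $g,g'$ by writing $g=h_1(xy)$, $g'=(xy)h_2$ and sandwiching with part (2). This costs you the extra bookkeeping you correctly identify: you must check that conjugation by $H$ stabilizes $\{x,x^3\}$ so that $\{h_1xh_2,h_1x^3h_2\}=\{gg'x,gg'x^3\}$ as sets, which works only because the two $\mu$-terms carry the same coefficient $b$. That observation is right (the elements of order $4$ in $\langle x\rangle \trianglelefteq G$ are exactly $x,x^3$), so the argument closes; the paper's uniform computation simply avoids needing it. Your version has the mild advantage of isolating a single concrete product to verify, at the price of the translation argument.
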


\begin{proof}
  It is easy to show that (1) and (2) hold.

  Suppose $g, g'\in G\setminus H$.
  Remark that $gg'\in H$.
  By $g'x^2=x^2g'$, $yg'=g'x^2y$, Lemma \ref{lem3.2} and Lemma \ref{lem3.3}, we have
  \begin{eqnarray*}
    \mu_g\mu_{g'}&=& (I_n\otimes g)(E\otimes 1+\tilde{A_1}\otimes y+\tilde{A_2}\otimes x^2y)
                (E\otimes 1+\tilde{A_1}\otimes x^2y+\tilde{A_2}\otimes y)(I_n\otimes g')\\
                &=& (I_n\otimes g)(aI_n\otimes 1+b(\tilde{A}_1+\tilde{A}_2)\otimes(1+x^2)
                    +bE\otimes (y+x^2y))(I_n\otimes g')\\
                 &=& a\sigma_{gg'}+b(\tilde{A}_1+\tilde{A}_2)\otimes g(1+x^2)g'
                    +bE\otimes g(y+x^2y)g'\\
                 &=& a\sigma_{gg'}+b(\tilde{A}_1+\tilde{A}_2)\otimes gg'(1+x^2)
                    +bE\otimes gg'x^2(y+x^2y)\\
                 &=& a\sigma_{gg'}+b(\tilde{A}_1+\tilde{A}_2)\otimes gg'(xy+x^3y)
                    +bE\otimes gg'x^2y(xy+x^3y)\\
                 &=& a\sigma_{gg'}+b(\tilde{A}_1+\tilde{A}_2)\otimes gg'(xy+x^3y)
                    +bE\otimes gg'(x+x^3)\\
                 &=& a\sigma_{gg'}+b\mu_{gg'x}+b\mu_{gg'x^3}.
  \end{eqnarray*}
  Now (3) holds.
\end{proof}

\begin{thm}\label{thm3.6}
  The set $\mathcal{D}=\{\sigma_h\mid h\in H\}\cup \{\mu_g\mid g\in G\setminus H\}$ forms an association scheme.
\end{thm}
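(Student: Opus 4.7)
The plan is to verify the three axioms of an association scheme, leveraging that the preceding lemmas have already done almost all of the work. Axiom (1) holds because $\sigma_1 = I_n \otimes I_4 = I_{4n}$ plays the role of $A_0$. Axiom (2) (the sum identity) and half of axiom (3) (closure under transposition) are exactly what Lemma \ref{lem3.4} delivers, while the multiplicative half of (3) is precisely the content of Lemma \ref{lem3.5}, whose three cases exhaust every product of two members of $\mathcal{D}$ and express each as an explicit integer combination of elements of $\mathcal{D}$.

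What remains is the basic structural check that every element of $\mathcal{D}$ is a nonzero $\{0,1\}$-matrix and that the eight listed matrices are pairwise distinct. The $\sigma_h$ are plainly nonzero block permutation matrices. For $\mu_g$, the key observation is that $E$, $\tilde{A}_1$, $\tilde{A}_2$ have pairwise disjoint supports in their common $n \times n$ frame: $E$ lives only in the first row and column of that block, while $\tilde{A}_1$ and $\tilde{A}_2$ sit in the lower-right $(n-1) \times (n-1)$ corner with mutually disjoint supports because $A_1$ and $A_2$ do. Reading off the $(I,J)$-entry of $\mu_g = E \otimes g + \tilde{A}_1 \otimes gy + \tilde{A}_2 \otimes gx^2y$ in block coordinates $(i,j,i',j')$, one sees that at most one of the three $\{0,1\}$-valued factors $E_{i'j'}$, $(\tilde{A}_1)_{i'j'}$, $(\tilde{A}_2)_{i'j'}$ is nonzero and each coefficient $g_{ij}$, $(gy)_{ij}$, $(gx^2y)_{ij}$ lies in $\{0,1\}$, so the entry is in $\{0,1\}$. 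Nonzeroness is clear because each summand is itself nonzero.

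Pairwise distinctness and disjointness of supports then follow by a one-line pigeonhole: eight nonzero $\{0,1\}$-matrices whose sum is the all-ones matrix $J_{4n}$ (by Lemma \ref{lem3.4}) must have pairwise disjoint supports, and in particular be pairwise distinct. The only (very mild) obstacle in the argument is this bookkeeping check that the construction genuinely yields eight distinct $\{0,1\}$-matrices; once that is in place, the theorem is an immediate consequence of Lemmas \ref{lem3.4} and \ref{lem3.5}.
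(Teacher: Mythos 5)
Your proof is correct and follows the same route as the paper, which simply declares the theorem ``clear by Lemma \ref{lem3.4} and Lemma \ref{lem3.5}''; you have merely spelled out the routine verifications (that $\sigma_1=I_{4n}$, that each $\mu_g$ is a nonzero $\{0,1\}$-matrix via the disjoint supports of $E$, $\tilde{A}_1$, $\tilde{A}_2$, and the pigeonhole giving pairwise distinctness) that the authors leave implicit. No issues.
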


\begin{proof}
  This is clear by Lemma \ref{lem3.4} and Lemma \ref{lem3.5}.
\end{proof}

\subsection{The association scheme $\mathcal{Q}$}
We define $n\times n$ matrices by
$$\hat{A}_1:=\left(\begin{array}{c|ccc}
  0&1&\dots&1\\ \hline 0&&&\\ \vdots&&A_1&\\ 0&&&
  \end{array}\right),\quad
  \hat{A}_2:=\left(\begin{array}{c|ccc}
  0&0&\dots&0\\ \hline 1&&&\\ \vdots&&A_2&\\ 1&&&
  \end{array}\right).$$

By Lemma \ref{lem3.1} and direct calculations, we have the following lemma.

\begin{lemm}\label{lem3.7}
  \begin{enumerate}
    \item $\hat{A}_1^2+\hat{A}_2^2=b(J_n-I_n)$.
    \item $\hat{A}_1\hat{A}_2+\hat{A}_2\hat{A}_1=bJ_n+(a-b)I_n$.
  \end{enumerate}
\end{lemm}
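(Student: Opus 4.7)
The plan is to verify both identities directly by $2\times 2$ block matrix multiplication, partitioning $\hat{A}_1$ and $\hat{A}_2$ along the dividing lines shown in their definitions so that each becomes a $2\times 2$ block matrix with a scalar top-left entry, row/column vectors of length $n-1$ along the borders, and an $(n-1)\times(n-1)$ interior block of the form $A_i$. The only algebraic inputs needed are Lemma \ref{lem3.1} and the valency identity for $A_1, A_2$: since ${}^tA_1 = A_2$ forces $n_1 = n_2$, and $1 + n_1 + n_2 = n-1$, both have valency $b = (n-2)/2$, so $\mathbf{1}^t A_i = b\mathbf{1}^t$ and $A_i \mathbf{1} = b\mathbf{1}$, where $\mathbf{1}$ is the all-ones column vector of length $n-1$.

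For part (1), I would compute each square as a $2\times 2$ block matrix: $\hat{A}_1^2$ has top-right block $\mathbf{1}^t A_1 = b\mathbf{1}^t$ and bottom-right block $A_1^2$, with everything else zero, while $\hat{A}_2^2$ has bottom-left block $A_2 \mathbf{1} = b\mathbf{1}$ and bottom-right block $A_2^2$, with everything else zero. Lemma \ref{lem3.1} gives $A_1^2 + A_2^2 = b(A_1 + A_2) = b(J_{n-1} - I_{n-1})$, and the sum of the blocks then matches the block decomposition of $b(J_n - I_n)$.

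For part (2), the same block computation yields $\hat{A}_1 \hat{A}_2$ with top-left entry $\mathbf{1}^t \mathbf{1} = n-1 = a$, border blocks $b\mathbf{1}^t$ and $b\mathbf{1}$, and lower-right block $A_1 A_2$, whereas $\hat{A}_2 \hat{A}_1$ has zero top-left entry, zero border, and lower-right block $\mathbf{1}\mathbf{1}^t + A_2 A_1 = J_{n-1} + A_2 A_1$. From Lemma \ref{lem3.1}, $A_1 A_2 + A_2 A_1 = 2bI_{n-1} + (b-1)(J_{n-1} - I_{n-1}) = (b+1)I_{n-1} + (b-1)J_{n-1}$, so the lower-right block of the sum becomes $(b+1)I_{n-1} + bJ_{n-1}$. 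The identity $a - b = (n-1) - (n-2)/2 = n/2 = b+1$ shows this equals the lower-right block of $bJ_n + (a-b)I_n$, and the top-left entry $a$ and border entries $b\mathbf{1}^t, b\mathbf{1}$ already match.

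No substantial obstacle is expected. The only point requiring attention is the bookkeeping of border terms: the zero row in $\hat{A}_2$ and the zero column in $\hat{A}_1$ annihilate the entire border of $\hat{A}_2 \hat{A}_1$, and it is precisely this asymmetric placement of zeros and ones in $\hat{A}_1$ and $\hat{A}_2$ that makes the sum $\hat{A}_1 \hat{A}_2 + \hat{A}_2 \hat{A}_1$ collapse into a clean combination of $J_n$ and $I_n$.
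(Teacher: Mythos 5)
Your proof is correct and matches the paper's approach: the paper states only that the lemma follows ``by Lemma \ref{lem3.1} and direct calculations,'' and your block-matrix computation, using the valency $n_1=n_2=b$ for the border terms and Lemma \ref{lem3.1} together with the identity $a-b=b+1$ for the interior blocks, is exactly that calculation carried out in full.
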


We consider a subgroup $K:=C_G(x)=\{1,x,x^2,x^3\}\cong C_4$ of $G$.
It is easy to see that $\sum_{k\in K}k=\sum_{g\in G\setminus K}g =J_4$.
We define $4n\times 4n$ matrices by
$$\sigma_k:=I_n\otimes k \quad \text{for $k\in K$}$$
and
\begin{eqnarray*}
  \tau_g&:=&\hat{A_1}\otimes g+\hat{A_2}\otimes gx^2\\
       &=&(I_n\otimes g)(\hat{A_1}\otimes 1+\hat{A_2}\otimes x^2) \\
       &=&(\hat{A_1}\otimes 1+\hat{A_2}\otimes x^2)(I_n\otimes g) \quad
           \text{for $g\in G\setminus K$.}
\end{eqnarray*}
We will show that $\mathcal{Q}:=\{\sigma_k\mid k\in K\}\cup \{\tau_g\mid g\in G\setminus K\}$ forms an association scheme.

\begin{lemm}\label{lem3.8}
  The set $\mathcal{Q}$ is closed by the transposition and
  $\sum_{k\in K}\sigma_k+\sum_{g\in G\setminus K}\tau_g=J_{4n}$.
\end{lemm}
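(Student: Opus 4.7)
The plan is to verify the two assertions separately, by direct calculation modeled on the proof of Lemma~\ref{lem3.4}. The key auxiliary facts are (i) the transpose relations ${}^t\hat{A}_1=\hat{A}_2$ and ${}^t\hat{A}_2=\hat{A}_1$, and (ii) the identity $\hat{A}_1+\hat{A}_2=J_n-I_n$. Both follow immediately by reading off the block definitions of $\hat{A}_1,\hat{A}_2$ and using $A_2={}^tA_1$ together with $A_1+A_2=J_{n-1}-I_{n-1}$ (which is the second defining axiom applied to the rank~3 scheme).

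For closure under transposition, I would treat the two types of basis matrices in turn. Since $k^{-1}\in K$ and ${}^tk=k^{-1}$ as permutation matrices, ${}^t\sigma_k=I_n\otimes k^{-1}=\sigma_{k^{-1}}\in\mathcal{Q}$. For $\tau_g$ with $g\in G\setminus K$, apply ${}^t(\cdot)$ to both tensor summands; using the transpose relations of $\hat{A}_1,\hat{A}_2$ this gives
\[
{}^t\tau_g=\hat{A}_2\otimes g^{-1}+\hat{A}_1\otimes x^{-2}g^{-1}.
\]
Since $x^2$ is central in $G=D_8$, one has $x^{-2}g^{-1}=g^{-1}x^2$, and then the right-hand side matches the defining form $\hat{A}_1\otimes g'+\hat{A}_2\otimes g'x^2$ with $g'=g^{-1}x^2\in G\setminus K$. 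Hence ${}^t\tau_g=\tau_{g^{-1}x^2}\in\mathcal{Q}$.

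For the sum, I would split and use bilinearity of the Kronecker product. On one side,
\[
\sum_{k\in K}\sigma_k=I_n\otimes\!\sum_{k\in K}k=I_n\otimes J_4.
\]
On the other side, as $g$ ranges over $G\setminus K$, so does $gx^2$ (multiplication by the central element $x^2$ permutes the coset $G\setminus K$), so both $\sum g$ and $\sum gx^2$ equal $J_4$, giving
\[
\sum_{g\in G\setminus K}\tau_g=(\hat{A}_1+\hat{A}_2)\otimes J_4=(J_n-I_n)\otimes J_4.
\]
Adding the two contributions yields $J_n\otimes J_4=J_{4n}$, as required.

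No serious obstacle is anticipated: the argument is parallel to that of Lemma~\ref{lem3.4}, with the dihedral subgroup $H$ replaced by the cyclic subgroup $K$ and the matrices $E,\tilde{A}_i$ replaced by $\hat{A}_1,\hat{A}_2$. The only point worth being careful about is tracking the role of the central element $x^2$, both in showing that the transpose of $\tau_g$ is again a $\tau_{g'}$ and in the bijection of $G\setminus K$ with itself used in the summation.
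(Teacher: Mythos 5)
Your proof is correct and follows essentially the same route as the paper: the paper verifies closure under transposition by listing the transposes case by case (your uniform formula ${}^t\tau_g=\tau_{g^{-1}x^2}$ reproduces exactly ${}^t\tau_y=\tau_{x^2y}$ and ${}^t\tau_{xy}=\tau_{x^3y}$), and the sum identity is obtained from the same two facts $\sum_{k\in K}k=\sum_{g\in G\setminus K}g=J_4$ and $\hat{A}_1+\hat{A}_2=J_n-I_n$.
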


\begin{proof}
  We have
  ${}^t\sigma_1=\sigma_1$,
  ${}^t\sigma_{x^2}=\sigma_{x^2}$,
  ${}^t\sigma_x=\sigma_{x^3}$,
  ${}^t\tau_{y}=\tau_{x^2y}$,
  and ${}^t\tau_{xy}=\tau_{x^3y}$.
  Thus $\mathcal{D}$ is closed by transposition.

  Since $\sum_{k\in K}k=\sum_{g\in G\setminus K}g=J_4$ and $I_{n}+\hat{A}_1+\hat{A}_2=J_n$,
  we have $\sum_{k\in K}\sigma_k+\sum_{g\in G\setminus K}\tau_g=J_{4n}$.
\end{proof}

\begin{lemm}\label{lem3.9}
  \begin{enumerate}
    \item $\sigma_k\sigma_{k'}=\sigma_{kk'}$ for $k, k'\in K$.
    \item $\sigma_k\tau_g=\tau_{kg}$ and $\tau_g\sigma_k=\tau_{gk}$ for $k\in K$ and $g\in G\setminus K$.
    \item $\tau_g\tau_{g'}=a\sigma_{gg'x^2}+b\tau_{gg'xy}+b\tau_{gg'x^3y}$ for $g, g'\in G\setminus K$.
  \end{enumerate}
\end{lemm}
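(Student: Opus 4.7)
The plan is to mirror the proof of Lemma~\ref{lem3.5}. Parts (1) and (2) are immediate from the Kronecker-product definition: $\sigma_k\sigma_{k'}=(I_n\otimes k)(I_n\otimes k')=I_n\otimes kk'=\sigma_{kk'}$, and $\sigma_k\tau_g$ and $\tau_g\sigma_k$ are handled by expanding $\tau_g=\hat{A}_1\otimes g+\hat{A}_2\otimes gx^2$ and using that $x^2$ is central in $G$ so that $gx^2k=gkx^2$. Note also that $kg,gk\in G\setminus K$ automatically, since $K$ is a subgroup.

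The substantive part is (3). My strategy is to factor $\tau_g=(I_n\otimes g)T$ and $\tau_{g'}=T(I_n\otimes g')$, with $T:=\hat{A}_1\otimes 1+\hat{A}_2\otimes x^2$, so that
\[
\tau_g\tau_{g'}=(I_n\otimes g)\,T^2\,(I_n\otimes g').
\]
Expanding $T^2$ and using $x^4=1$ gives
\[
T^2=(\hat{A}_1^2+\hat{A}_2^2)\otimes 1+(\hat{A}_1\hat{A}_2+\hat{A}_2\hat{A}_1)\otimes x^2,
\]
and Lemma~\ref{lem3.7} rewrites this as a linear combination of $I_n\otimes 1$, $J_n\otimes 1$, $I_n\otimes x^2$, and $J_n\otimes x^2$.

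Next I substitute $J_n=I_n+\hat{A}_1+\hat{A}_2$ (which also appears in the proof of Lemma~\ref{lem3.8}) to rewrite $T^2$ purely in terms of $I_n$ and $\hat{A}_1+\hat{A}_2$. Reinserting $(I_n\otimes g)$ and $(I_n\otimes g')$ and using centrality of $x^2$ to commute $gg'\in K$ past $x^2$ yields
\[
\tau_g\tau_{g'}=a\,\sigma_{gg'x^2}+b(\hat{A}_1+\hat{A}_2)\otimes gg'(1+x^2).
\]
The final step is to apply Lemma~\ref{lem3.2} to replace $1+x^2$ by $xy+x^3y$, and then verify that the resulting expression is exactly $b\tau_{gg'xy}+b\tau_{gg'x^3y}$; this verification uses, once more, the centrality of $x^2$ to identify $\hat{A}_2\otimes gg'xyx^2=\hat{A}_2\otimes gg'x^3y$ and $\hat{A}_2\otimes gg'x^3yx^2=\hat{A}_2\otimes gg'xy$.

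The main obstacle is not conceptual but bookkeeping: the two distinct roles of $x^2$, one built into $T$ and one hidden in the definition of $\tau_{gg'xy}$ and $\tau_{gg'x^3y}$, must be tracked carefully, and Lemma~\ref{lem3.2} must be invoked precisely when the $J_n$-based expression collapses into the $(\hat{A}_1+\hat{A}_2)$-based one so that the constants $a$ and $b$ come out with the right multiplicities.
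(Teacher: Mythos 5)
Your proposal is correct and follows essentially the same route as the paper: factor $\tau_g\tau_{g'}$ through $T^2$ with $T=\hat{A}_1\otimes 1+\hat{A}_2\otimes x^2$, apply Lemma~\ref{lem3.7}, rewrite $J_n-I_n=\hat{A}_1+\hat{A}_2$, use centrality of $x^2$ to collect $gg'$, and invoke Lemma~\ref{lem3.2} to convert $1+x^2$ into $xy+x^3y$. The final bookkeeping identifying the result with $b\tau_{gg'xy}+b\tau_{gg'x^3y}$ is exactly the paper's last step.
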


\begin{proof}
  It is easy to show that (1) and (2) hold.

  Suppose $g, g'\in G\setminus K$.
  Remark that $gg'\in K$.
  By Lemma \ref{lem3.2} and Lemma \ref{lem3.7}, we have
  \begin{eqnarray*}
    \tau_g\tau_{g'}&=& (I_n\otimes g)(\hat{A_1}\otimes 1+\hat{A_2}\otimes x^2)^2(I_n\otimes g')\\
                     &=& (I_n\otimes gg')(\hat{A_1}^2\otimes 1+(\hat{A_1}\hat{A_2}+\hat{A_2}\hat{A_1})\otimes x^2
                         +\hat{A_1}^2\otimes 1)\\
                     &=& (I_n\otimes gg')(b(J_n-I_n)\otimes 1+(bJ_n+(a-b)I_n)\otimes x^2)\\
                     &=& (I_n\otimes gg')(aI_n\otimes x^2+b(J_n-I_n)\otimes(1+x^2))\\
                     &=& (I_n\otimes gg')(aI_n\otimes x^2+b(\hat{A_1}+\hat{A_2})\otimes(1+x^2))\\
                     &=& (I_n\otimes gg')(aI_n\otimes x^2+b(\hat{A_1}+\hat{A_2})\otimes(xy+x^3y))\\
                     &=& a\sigma_{gg'x^2}+b\tau_{gg'xy}+b\tau_{gg'x^3y}.    
  \end{eqnarray*}
  Now (3) holds.
\end{proof}

\begin{thm}\label{thm3.10}
  The set $\mathcal{Q}=\{\sigma_k\mid k\in K\}\cup \{\tau_g\mid g\in G\setminus K\}$ forms an association scheme.
\end{thm}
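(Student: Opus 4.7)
The plan is to verify the three defining axioms of an association scheme for $\mathcal{Q}$, in direct parallel with the proof of Theorem \ref{thm3.6}. First, I would note that $\sigma_1=I_n\otimes 1=I_{4n}$ plays the role of $A_0$, which is axiom (1). The sum condition (axiom (2)) and the closure of $\mathcal{Q}$ under transposition are both supplied by Lemma \ref{lem3.8}. The remaining content of axiom (3), namely that every product of the form $\sigma_k\sigma_{k'}$, $\sigma_k\tau_g$, $\tau_g\sigma_k$, or $\tau_g\tau_{g'}$ lies in the span of $\mathcal{Q}$, is supplied by Lemma \ref{lem3.9}.

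A small preliminary sanity check is in order: each element of $\mathcal{Q}$ must be a non-zero matrix with entries in $\{0,1\}$. The $\sigma_k=I_n\otimes k$ are permutation matrices, so they are harmless. For $\tau_g=\hat{A}_1\otimes g+\hat{A}_2\otimes gx^2$, I would observe that the permutation matrices $g$ and $gx^2$ have disjoint supports: since $x^2=(1,3)(2,4)$ has no fixed point, $g(p)\neq gx^2(p)$ for every $p$, so no entry of $g$ and the corresponding entry of $gx^2$ are simultaneously $1$. Consequently the two tensor-product summands defining $\tau_g$ never share a non-zero position, and $\tau_g$ is indeed a $\{0,1\}$-matrix. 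The eight listed matrices are then pairwise distinct by inspection of their block structure.

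With these observations in hand, the theorem should follow immediately, exactly as Theorem \ref{thm3.6} followed from Lemmas \ref{lem3.4} and \ref{lem3.5}. I anticipate no real obstacle here: the genuine computational content is the quadratic identity in Lemma \ref{lem3.9}(3), whose proof already used Lemma \ref{lem3.2} and Lemma \ref{lem3.7}, and that work is done. Accordingly, the proof of Theorem \ref{thm3.10} itself should reduce to a single line of the form ``Clear by Lemma \ref{lem3.8} and Lemma \ref{lem3.9}.''
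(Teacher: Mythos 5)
Your proposal is correct and matches the paper exactly: the paper's proof of Theorem \ref{thm3.10} is the single line ``This is clear by Lemma \ref{lem3.8} and Lemma \ref{lem3.9}.'' Your additional check that each $\tau_g$ is a $\{0,1\}$-matrix (via the disjoint supports of $g$ and $gx^2$) is a harmless and valid extra verification that the paper leaves implicit.
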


\begin{proof}
  This is clear by Lemma \ref{lem3.8} and Lemma \ref{lem3.9}.
\end{proof}

\section{The character tables of $\mathcal{D}$ and $\mathcal{Q}$}
In this section, we will determine the character tables of $\mathcal{D}$ and $\mathcal{Q}$.
Consequently, we can see that the tables are identical.
Moreover, we will show that their Frobenius-Schur indicators are different.

In this section, we use some terminologies not defined in Preliminaries.
For them, the reader is referred to \cite{Zi} or \cite{Hanaki2005}, for example.

\begin{prop}\label{prop:charD}
  The character table of $\mathcal{D}$ is 
  $$\begin{array}{c|rrrrrrrr|c}
      &\s_1&\s_{x^2}&\s_y&\s_{x^2y}&\mu_x&\mu_{x^3}&\mu_{xy}&\mu_{x^3y}&m_{\chi_i}\\
      \hline
      \chi_1&1&1&1&1&a&a&a&a&1\\
      \chi_2&1&1&-1&-1&a&a&-a&-a&1\\
      \chi_3&1&1&1&1&-1&-1&-1&-1&a\\
      \chi_4&1&1&-1&-1&-1&-1&1&1&a\\
      \chi_5&2&-2&0&0&0&0&0&0&n\\
  \end{array}$$
  The Frobenius-Schur indicators are 
  $\nu(\chi_i)=1$ ($i=1,2,3,4, 5$).
\end{prop}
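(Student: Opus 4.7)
The plan is first to find all one-dimensional characters directly from the multiplication rules of Lemma \ref{lem3.5}, then to pin down the remaining two-dimensional character by exhibiting a concrete representation, and finally to compute multiplicities from the standard character and Frobenius--Schur indicators from real realizability.

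For the linear characters, Lemma \ref{lem3.5}(1) says $\{\sigma_h : h \in H\}$ spans a subalgebra isomorphic to $\mathbb{C}H \cong \mathbb{C}(C_2 \times C_2)$, so any one-dimensional character $\chi$ of $\mathbb{C}\mathcal{D}$ restricts to a linear character $\lambda$ of $H$. By Lemma \ref{lem3.5}(2), setting $c := \chi(\mu_x)$ then determines $\chi(\mu_g) = c\,\lambda(h)$ whenever $g = hx$. Applying $\chi$ to $\mu_x^2 = a\sigma_{x^2} + b(\mu_x+\mu_{x^3})$ gives $c^2 = a\lambda(x^2) + bc(1+\lambda(x^2))$, and applying it to $\mu_{xy}^2 = a\sigma_1 + b(\mu_x+\mu_{x^3})$ gives $c^2 = a + bc(1+\lambda(x^2))$; subtracting forces $\lambda(x^2)=1$, after which $c^2 - 2bc - a = 0$ has discriminant $4(b^2+a)=n^2$ and roots $c\in\{a,-1\}$. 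Combined with the two choices $\lambda(y) = \pm 1$ this produces exactly four one-dimensional characters, matching $\chi_1,\dots,\chi_4$.

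Since $\dim \mathbb{C}\mathcal{D} = 8$ and the four linear characters contribute four dimensions, the remainder is a single $\M_2(\mathbb{C})$ summand, giving exactly one more irreducible, of degree two. I would exhibit its representation $\rho$ explicitly by $\rho(\sigma_{x^2}) = -I_2$, $\rho(\sigma_y) = \bigl(\begin{smallmatrix} 1 & 0 \\ 0 & -1 \end{smallmatrix}\bigr)$, and $\rho(\mu_x) = \bigl(\begin{smallmatrix} 0 & 1 \\ -a & 0 \end{smallmatrix}\bigr)$, extending to the rest of $\mathcal{D}$ via Lemma \ref{lem3.5}(2) and then checking the product relation Lemma \ref{lem3.5}(3) by a short matrix calculation. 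The character of $\rho$ is then exactly the row $\chi_5$ in the table.

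For the multiplicities, the standard character takes value $4n$ at $\sigma_1$ and $0$ elsewhere, because each permutation $h \in H \setminus \{1\}$ has zero trace and each of the $n \times n$ matrices $E, \tilde{A}_1, \tilde{A}_2$ has zero diagonal, so every tensor summand in $\sigma_h$ ($h \neq 1$) or $\mu_g$ has trace zero. Solving $\sum_i m_i \chi_i(A) = 4n\,\delta_{A,\sigma_1}$ at a handful of $A$ gives $m_1 = m_2 = 1$, $m_3 = m_4 = a$, $m_5 = n$. For the Frobenius--Schur indicators, each linear $\chi_i$ takes real values and is realized by a real representation, and $\rho$ above is itself real, so $\nu(\chi_i) = 1$ for all $i$; Theorem \ref{thm:FS}(2) provides the cross-check $\sum_i \nu(\chi_i)\chi_i(\sigma_1) = 6$, matching the six symmetric elements of $\mathcal{D}$ identified in the proof of Lemma \ref{lem3.4}. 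The main obstacle is rigorously producing the degree-two irreducible; the explicit real representation above handles both its existence and its Frobenius--Schur indicator in one step.
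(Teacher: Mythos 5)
Your argument is correct, but it takes a genuinely different route from the paper's. The paper identifies the linear characters structurally: it observes that $\{\s_1,\s_{x^2},\s_y,\s_{x^2y}\}$ is a normal closed subset and $\{\s_1,\s_{x^2},\mu_x,\mu_{x^3}\}$ a strongly normal closed subset, pulls $\chi_1,\chi_2,\chi_3$ back from quotient schemes via \cite{Hanaki2003}, obtains $\chi_4=\chi_2\chi_3$ as a character product via \cite{Hanaki2005}, and then determines $\chi_5$ purely from the column relations $\sum_i m_{\chi_i}\chi_i(\rho)=0$, without exhibiting any degree-two representation at this stage (that is deferred to Proposition \ref{repD}); for $\nu(\chi_5)$ it counts the six symmetric relations and invokes Theorem \ref{thm:FS}(2). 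You instead work directly from the structure constants of Lemma \ref{lem3.5}: your quadratic $c^2-2bc-a=0$ has discriminant $4(b^2+a)=n^2$ and roots $c\in\{a,-1\}$ as claimed, and your explicit real two-dimensional representation (a conjugate of the paper's $T$ from Proposition \ref{repD}, by $\operatorname{diag}(1,-1)$) settles the fifth row and $\nu(\chi_5)=1$ in one stroke, which is arguably cleaner and more self-contained than the paper's citation-plus-counting argument; your multiplicity computation via the vanishing of the standard character off $\s_1$ is essentially the paper's orthogonality step. The one point to tighten: your derivation shows only that a linear character \emph{must} be one of the four candidates, not that all four exist. This is easily repaired, either by checking directly that each candidate respects every product in Lemma \ref{lem3.5} (routine: once $\lambda(x^2)=1$, every instance of (3) reduces to $c^2=a+2bc$), or by noting that the valency character always exists, at most four linear characters are possible, and $8-k$ must be a sum of squares each at least $4$, which forces $k=4$.
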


\begin{proof}
  By Lemma \ref{lem3.5}, $\mathcal{T}=\{\s_1,\s_{x^2},\s_y,\s_{x^2y}\}$ is a normal closed subset of $\mathcal{D}$.
  By \cite[Theorem 3.5]{Hanaki2003}, we can determine $\chi_1$ and $\chi_3$.
  Again, by Lemma \ref{lem3.5},
  $\mathcal{U}=\{\s_1,\s_{x^2},\mu_{x},\mu_{x^3}\}$ is a strongly normal closed subset of $\mathcal{D}$
  and $\chi_2$ is determined.
  By \cite[Theorem 3.5]{Hanaki2005}, $\chi_4:=\chi_2\chi_3$ is an irreducible character.
  Now, by $\sum_{i=1}^5 m_{\chi_i}\chi_i(\rho)=0$ for $\s_1\ne \rho\in \mathcal{D}$ \cite[Chap. 4]{Zi},
  we can determine $\chi_5$.

  Since $\chi_i$ ($i=1,2,3,4$) are rational characters of degree $1$, they are of the first kind.
  There are $6$ symmetric relations by the proof of Lemma \ref{lem3.4}.
  By Theorem \ref{thm:FS} (2),
  $$6=\sum_{i=1}^5 \nu(\chi_i)\chi_i(1)=1+1+1+1+2\nu(\chi_5)$$
  and we have $\nu(\chi_5)=1$.
\end{proof}

\begin{prop}\label{prop:charQ}
  The character table of $\mathcal{Q}$ is 
  $$\begin{array}{c|rrrrrrrr|c}
      &\s_1&\s_{x^2}&\s_x&\s_{x^3}&\tau_{xy}&\tau_{x^3y}&\tau_{y}&\tau_{x^2y}&m_{\varphi_i}\\
      \hline
      \varphi_1&1&1&1&1&a&a&a&a&1\\
      \varphi_2&1&1&-1&-1&a&a&-a&-a&1\\
      \varphi_3&1&1&1&1&-1&-1&-1&-1&a\\
      \varphi_4&1&1&-1&-1&-1&-1&1&1&a\\
      \varphi_5&2&-2&0&0&0&0&0&0&n\\
  \end{array}$$
  The Frobenius-Schur indicators are 
  $\nu(\varphi_i)=1$ ($i=1,2,3,4$) and $\nu(\varphi_5)=-1$.
\end{prop}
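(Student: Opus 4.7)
The plan is to mirror the proof of Proposition \ref{prop:charD}, using Lemma \ref{lem3.9} to locate the closed subsets of $\mathcal{Q}$ that play the roles of $\mathcal{T}$ and $\mathcal{U}$ there.

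First I would identify $\mathcal{T}':=\{\sigma_k\mid k\in K\}$ as a normal closed subset of $\mathcal{Q}$: Lemma \ref{lem3.9}(1) gives closure under multiplication, and Lemma \ref{lem3.8} gives closure under transposition. The associated quotient has rank $2$, and pulling back its two irreducible characters via \cite[Theorem 3.5]{Hanaki2003} produces $\varphi_1$ (trivial) and $\varphi_3$. Next, I would check that $\mathcal{U}':=\{\sigma_1,\sigma_{x^2},\tau_{xy},\tau_{x^3y}\}$ is a strongly normal closed subset: using $(xy)^2=(x^3y)^2=1$ and $(xy)(x^3y)=(x^3y)(xy)=x^2$, Lemma \ref{lem3.9}(3) puts the four products $\tau_{xy}\tau_{xy}$, $\tau_{xy}\tau_{x^3y}$, $\tau_{x^3y}\tau_{xy}$, $\tau_{x^3y}\tau_{x^3y}$ into $\C\mathcal{U}'$, and $\mathcal{U}'$ is transposition-closed since ${}^t\tau_{xy}=\tau_{x^3y}$. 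The non-trivial character of the order-$2$ quotient then pulls back to $\varphi_2$. After this, $\varphi_4:=\varphi_2\varphi_3$ is irreducible by \cite[Theorem 3.5]{Hanaki2005}, and $\varphi_5$ is pinned down by the column orthogonality $\sum_{i=1}^5 m_{\varphi_i}\varphi_i(\rho)=0$ for $\sigma_1\ne\rho\in\mathcal{Q}$ (\cite[Chap.~4]{Zi}).

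For the Frobenius-Schur indicators, $\varphi_1,\ldots,\varphi_4$ all have degree $1$ and rational values, hence are of the first kind and $\nu(\varphi_i)=1$. The genuinely new input is the count of symmetric relations: from the proof of Lemma \ref{lem3.8}, only $\sigma_1$ and $\sigma_{x^2}$ satisfy ${}^tA=A$, so the count is $2$. Applying Theorem \ref{thm:FS}(2),
\[
2=\sum_{i=1}^{5}\nu(\varphi_i)\varphi_i(\sigma_1)=1+1+1+1+2\nu(\varphi_5),
\]
forces $\nu(\varphi_5)=-1$. This is precisely where $\mathcal{Q}$ diverges from $\mathcal{D}$: the latter has $6$ symmetric relations and $\nu(\chi_5)=1$, while $\mathcal{Q}$ has only $2$ and $\nu(\varphi_5)=-1$.

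The main obstacle I expect is verifying that $\mathcal{U}'$ really is \emph{strongly} normal, so that the pullback construction giving $\varphi_2$ is legitimate; this reduces to matching the structure constant for $\sigma_1$ in $\tau_{xy}\tau_{x^3y}$ with the valency of $\tau_{xy}$, both of which equal $a$ by Lemma \ref{lem3.9}(3). Everything else is routine bookkeeping driven by the structure constants in Lemma \ref{lem3.9} and the orthogonality relations.
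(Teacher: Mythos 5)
Your proposal is correct and follows essentially the same route as the paper: normal closed subset $\mathcal{T}'$ for $\varphi_1,\varphi_3$, strongly normal closed subset $\mathcal{U}'$ for $\varphi_2$, the character product $\varphi_2\varphi_3$ for $\varphi_4$, orthogonality for $\varphi_5$, and Theorem \ref{thm:FS}(2) with the count of $2$ symmetric relations to get $\nu(\varphi_5)=-1$. The extra details you supply (the products $\tau_{xy}\tau_{x^3y}$ landing in $\C\mathcal{U}'$ with coefficient $a$ on $\sigma_1$) are accurate and only make explicit what the paper leaves to Lemma \ref{lem3.9}.
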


\begin{proof}
  By Lemma \ref{lem3.9}, $\mathcal{T}'=\{\s_1,\s_{x^2},\s_{x},\s_{x^3}\}$ is a normal closed subset of $\mathcal{Q}$.
  We can determine $\varphi_1$ and $\varphi_3$.
  Again, by Lemma \ref{lem3.9},
  $\mathcal{U}'=\{\s_1,\s_{x^2},\tau_{xy},\tau_{x^3y}\}$ is a strongly normal closed subset of $\mathcal{Q}$
  and $\varphi_2$ is determined.
  Similarly to Proposition \ref{prop:charD},
  We can determine the table.

  Since $\varphi_i$ ($i=1,2,3,4$) are rational characters of degree $1$, they are of the first kind.
  There are $2$ symmetric relations by the proof of Lemma \ref{lem3.8}.
  By Theorem \ref{thm:FS} (2),
  $$2=\sum_{i=1}^5 \nu(\varphi_i)\varphi_i(1)=1+1+1+1+2\nu(\varphi_5)$$
  and we have $\nu(\varphi_5)=-1$.
\end{proof}

\section{Irreducible representations and rational adjacency algebras}
We will determine irreducible representations and the structures of rational adjacency algebras
of $\mathcal{D}$ and $\mathcal{Q}$, respectively.

\begin{prop}\label{repD}
  The map $T:\mathcal{D}\to \M_2(\C)$ defined by
  $$\sigma_{x^2} \mapsto \left(\begin{array}{rr}-1&0\\0&-1\end{array}\right),\quad
  \sigma_{y} \mapsto \left(\begin{array}{rr}1&0\\0&-1\end{array}\right),\quad
  \mu_{x} \mapsto \left(\begin{array}{rr}0&-1\\a&0\end{array}\right)$$
  is an irreducible representation of $\mathcal{D}$ affording $\chi_5$.
\end{prop}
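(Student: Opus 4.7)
The plan is to first extend $T$ from the three specified basis elements to all of $\mathcal{D}$ using Lemma \ref{lem3.5}(1)(2), then verify that the resulting linear map respects multiplication by reducing the general product formula to a single base case. Explicitly, after setting $T(\sigma_1):=I_2$, the remaining images are forced: $T(\sigma_{x^2y})=T(\sigma_{x^2})T(\sigma_y)$, $T(\mu_{x^3})=T(\sigma_{x^2})T(\mu_x)=-T(\mu_x)$, $T(\mu_{xy})=T(\mu_x)T(\sigma_y)$, and $T(\mu_{x^3y})=T(\sigma_y)T(\mu_x)$ (using $yx=x^3y$). A short matrix calculation produces eight explicit $2\times 2$ matrices: the four $T(\sigma_h)$ are diagonal $\pm 1$-matrices and the four $T(\mu_g)$ are antidiagonal with nonzero entries in $\{1,-1,a,-a\}$.

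Next, I would verify that $T$ is multiplicative on the basis. Relations (1) and (2) of Lemma \ref{lem3.5} collapse to the trivial checks that $T(\sigma_{x^2})$ and $T(\sigma_y)$ are commuting involutions. For relation (3), the key observation is that every $\mu_g$ with $g\in G\setminus H$ can be written both as $\sigma_h\mu_x$ and as $\mu_x\sigma_{h'}$ for some $h,h'\in H$, since $H$ is normal in $G$ and $G\setminus H=Hx=xH$. Hence, once $T$ respects (1) and (2), every product $T(\mu_g)T(\mu_{g'})$ reduces to a sandwich of the form $T(\sigma_h)T(\mu_x)^2 T(\sigma_{h'})$, and it suffices to verify the single base-case identity $T(\mu_x)^2=aT(\sigma_{x^2})+bT(\mu_x)+bT(\mu_{x^3})$. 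This is immediate: $T(\mu_x)^2=-aI_2$, and the right-hand side also equals $-aI_2$ because $T(\mu_{x^3})=-T(\mu_x)$.

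With multiplicativity in hand, the character $\chi$ of $T$ is read off from the traces of the eight images: $\chi(\sigma_1)=2$, $\chi(\sigma_{x^2})=-2$, and $\chi$ vanishes on the remaining six basis elements (the other $T(\sigma_h)$ are traceless diagonal matrices and the four $T(\mu_g)$ are antidiagonal). Comparing with the table in Proposition \ref{prop:charD}, we have $\chi=\chi_5$. Since $\chi_5$ is an irreducible character of degree $2$, the $2$-dimensional representation $T$ affording $\chi_5$ is automatically irreducible.

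The main obstacle is not any single computation but the bookkeeping to justify the reduction of Lemma \ref{lem3.5}(3) to the single case $\mu_x^2$. This reduction depends on the fact that conjugation by any $g'\in G\setminus H$ sends $\{x,x^3\}$ to itself (either fixing $x$ or inverting it), so that the unordered pair $\{\mu_{gg'x},\mu_{gg'x^3}\}$ appearing on the right-hand side of (3) is precisely the pair produced by the sandwich calculation $\sigma_h\mu_x^2\sigma_{h'}$; without this symmetry the two sides would not match term by term.
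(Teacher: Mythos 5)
Your proposal is correct and follows essentially the same route as the paper, whose proof is simply the one-line instruction to check all products in Lemma \ref{lem3.5}; you carry out that verification explicitly, with the added (and sound) organizational device of reducing case (3) to the single identity $T(\mu_x)^2=-aI_2$ via $T(\sigma_{x^2})=-I_2$ and $G\setminus H=Hx=xH$. The identification of the character with $\chi_5$ by comparing traces against Proposition \ref{prop:charD} is also the intended reading of ``affording $\chi_5$.''
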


\begin{proof}
  We can check all products in Lemma \ref{lem3.5}.
\end{proof}

Similarly, we have the following proposition.

\begin{prop}\label{repQ}
  The map $T':\mathcal{Q}\to \M_2(\C)$ defined by
  $$\sigma_{x} \mapsto \left(\begin{array}{rr}\sqrt{-1}&0\\0&-\sqrt{-1}\end{array}\right),\quad
  \tau_{y} \mapsto \left(\begin{array}{rr}0&-1\\a&0\end{array}\right)$$
  is an irreducible representation of $\mathcal{Q}$ affording $\varphi_5$.
\end{prop}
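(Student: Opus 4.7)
The plan is to follow the template of the proof of Proposition~\ref{repD}: extend $T'$ to all of $\mathcal{Q}$ by forcing parts (1) and (2) of Lemma~\ref{lem3.9}, verify part (3) by direct computation, and then read off the afforded character to conclude.

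Since $K=\langle x\rangle$ is cyclic of order $4$, parts (1) and (2) of Lemma~\ref{lem3.9} determine every image from $T'(\sigma_x)$ and $T'(\tau_y)$: set $T'(\sigma_1)=I_2$, $T'(\sigma_{x^i})=T'(\sigma_x)^i$ for $i=2,3$, and $T'(\tau_{x^iy})=T'(\sigma_x)^i T'(\tau_y)$ for $i=1,2,3$. From the diagonal form, $T'(\sigma_x)^2=-I_2$ and $T'(\sigma_x)^4=I_2$ are immediate, so part~(1) is self-consistent, and part~(2) then holds by construction.

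The substance is part~(3) of Lemma~\ref{lem3.9}. Left- and right-multiplication by $T'(\sigma_{x^i})$ reduces the whole family $\tau_g\tau_{g'}=a\sigma_{gg'x^2}+b\tau_{gg'xy}+b\tau_{gg'x^3y}$ to the single identity
\[
  T'(\tau_y)^2 \;=\; aT'(\sigma_{x^2}) + bT'(\tau_{xy}) + bT'(\tau_{x^3y}),
\]
corresponding to $g=g'=y$. A short calculation gives $T'(\tau_y)^2=-aI_2$ and $aT'(\sigma_{x^2})=-aI_2$. The two $b$-terms cancel because $T'(\sigma_{x^3})=-T'(\sigma_x)$, which forces $T'(\tau_{xy})+T'(\tau_{x^3y})=(T'(\sigma_x)+T'(\sigma_{x^3}))T'(\tau_y)=0$. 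This cancellation, engineered by the choice $T'(\sigma_x)^2=-I_2$, is the only step that could conceivably fail and is the sole point deserving real care.

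Finally, reading off traces: $T'(\sigma_1)$ and $T'(\sigma_{x^2})$ have traces $\pm2$; the matrices $T'(\sigma_x)$ and $T'(\sigma_{x^3})$ are diagonal with entries $\pm\sqrt{-1}$ of opposite sign and so have trace $0$; and each $T'(\tau_g)$ is off-diagonal and hence traceless. The resulting values $(2,-2,0,0,0,0,0,0)$ are exactly the row $\varphi_5$ of Proposition~\ref{prop:charQ}, so $T'$ affords $\varphi_5$; irreducibility is then automatic since $\varphi_5$ is a $2$-dimensional irreducible character.
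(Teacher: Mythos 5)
Your proof is correct and takes essentially the same approach as the paper: the paper's proof of Proposition~\ref{repQ} is just ``similarly'' to Proposition~\ref{repD}, i.e.\ check all the products in Lemma~\ref{lem3.9}, which is exactly what you do, with the useful extra observation that part (3) reduces to the single identity for $T'(\tau_y)^2$. The only point you gloss over is that the right-hand half of Lemma~\ref{lem3.9}(2), $\tau_g\sigma_k=\tau_{gk}$ (and hence the right-multiplication step in your reduction of (3)), is not purely ``by construction'' but requires the one-line verification $T'(\tau_y)T'(\sigma_x)=-T'(\sigma_x)T'(\tau_y)$, which does hold.
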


Now we can determine the structures of rational adjacency algebras.
To describe the result, we define (generalized) quaternion algebras \cite[1.6]{Pierce}.
For a field $F$ and $r,s\in F\setminus\{0\}$, a \emph{quaternion algebra} $F(r,s)$ is a four dimensional
$F$-algebra with basis $1$, $\bs{i}$, $\bs{j}$, and $\bs{k}$ with the products
$$\bs{i}^2=r,\quad \bs{j}^2=s,\quad \bs{i}\bs{j}=-\bs{j}\bs{i}=\bs{k}.$$
If $r$ and $s$ are negative rational numbers, then $\Q(r,s)$ is a division algebra.

\begin{prop}
  As $\Q$-algebras, we have the following isomorphisms.
  \begin{enumerate}
    \item $\Q\mathcal{D}\cong \Q\oplus\Q\oplus\Q\oplus\Q\oplus \M_2(\Q)$.
    \item $\Q\mathcal{Q}\cong \Q\oplus\Q\oplus\Q\oplus\Q\oplus \Q(-1,-a)$.
    ($\Q(-1,-a)$ is a division algebra.)
  \end{enumerate}
\end{prop}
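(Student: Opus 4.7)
The plan is to exhibit, in each case, an explicit $\Q$-algebra homomorphism from the adjacency algebra to the proposed target and then verify it is an isomorphism by extending scalars to $\C$. Both $\Q\mathcal{D}$ and $\Q\mathcal{Q}$ are semisimple of $\Q$-dimension $8$, and from Propositions \ref{prop:charD} and \ref{prop:charQ} every irreducible character takes values in $\Q$. In particular, each of the four linear characters already furnishes a $\Q$-algebra map onto $\Q$, so the only nontrivial step is identifying the remaining $4$-dimensional Wedderburn block of each algebra as a specific quaternion algebra over $\Q$.

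For $\mathcal{D}$, I would note that the three matrices defining $T$ in Proposition \ref{repD} have integer entries, so $T$ descends to a $\Q$-algebra homomorphism $T : \Q\mathcal{D} \to \M_2(\Q)$. Assembling
$$\Phi := (\chi_1,\chi_2,\chi_3,\chi_4, T) : \Q\mathcal{D} \longrightarrow \Q\oplus\Q\oplus\Q\oplus\Q\oplus\M_2(\Q)$$
yields a $\Q$-algebra map between spaces of equal dimension $8$. Extending scalars to $\C$, the components of $\Phi_\C$ are the five pairwise non-equivalent irreducible representations of $\C\mathcal{D}$, so $\Phi_\C$ coincides with the Wedderburn isomorphism $\C\mathcal{D}\cong\C^4\oplus\M_2(\C)$; hence $\Phi$ is already an isomorphism.

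For $\mathcal{Q}$, the representation $T'$ in Proposition \ref{repQ} is defined over $\C$ rather than $\Q$, so a $\Q$-form must be constructed by hand. I would introduce the $\Q$-linear map $T'' : \Q\mathcal{Q} \to \Q(-1,-a)$ determined by
$$\sigma_{x^k}\longmapsto \bs{i}^k, \qquad \tau_{x^k y}\longmapsto \bs{i}^k\bs{j}\qquad(k=0,1,2,3).$$
Parts (1) and (2) of Lemma \ref{lem3.9} follow from the quaternion identity $\bs{j}\bs{i}=-\bs{i}\bs{j}$ (which mirrors $yxy^{-1}=x^{-1}$), while relation (3) reduces via (2) to the single base case $\tau_y^2 = a\sigma_{x^2} + b\tau_{xy} + b\tau_{x^3 y}$, which under $T''$ reads $\bs{j}^2 = -a = -a\cdot 1 + b\bs{k} - b\bs{k}$ and holds. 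Assembling
$$\Phi' := (\varphi_1,\varphi_2,\varphi_3,\varphi_4, T'') : \Q\mathcal{Q}\longrightarrow \Q\oplus\Q\oplus\Q\oplus\Q\oplus\Q(-1,-a)$$
and using $\Q(-1,-a)\otimes_\Q \C\cong\M_2(\C)$, one sees that $T''_\C$ becomes equivalent to $T'$ and hence $\Phi'_\C$ is again the Wedderburn isomorphism of $\C\mathcal{Q}$; dimension matching then forces $\Phi'$ to be a $\Q$-algebra isomorphism. The algebra $\Q(-1,-a)$ is a division algebra because both structure constants are negative rationals.

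The hard part will be checking that $T''$ is well-defined as an algebra homomorphism. While the $\sigma$--$\sigma$ and $\sigma$--$\tau$ products are immediate once one matches $\bs{j}\bs{i}^k = (-1)^k\bs{i}^k\bs{j}$ with $yx^k y^{-1}=x^{-k}$, the various $\tau_g\tau_{g'}$ identities in Lemma \ref{lem3.9}(3) require careful bookkeeping when reducing the group product $gg'$ modulo $K$. Once this is in hand, the rest of the proof is a clean scalar-extension comparison between two $8$-dimensional semisimple $\Q$-algebras.
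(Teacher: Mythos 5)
Your proposal is correct and follows essentially the same route as the paper: the four rational linear characters account for $\Q\oplus\Q\oplus\Q\oplus\Q$, the integral representation $T$ gives the $\M_2(\Q)$ block of $\Q\mathcal{D}$, and the remaining block of $\Q\mathcal{Q}$ is identified with $\Q(-1,-a)$ via the relations $\bs{i}^2=-1$, $\bs{j}^2=-a$, $\bs{i}\bs{j}=-\bs{j}\bs{i}$. The only difference is packaging: the paper exhibits these quaternion relations directly among $T'(\s_x)$, $T'(\tau_y)$, $T'(\tau_{xy})$ inside $T'(\Q\mathcal{Q})\subset\M_2(\C)$, whereas you define an abstract map $T''$ into $\Q(-1,-a)$ and verify the scheme relations before extending scalars --- both verifications amount to the same computation.
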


\begin{proof}
  Since $\chi_i$ ($i=1,2,3,4$) have rational values on $\mathcal{D}$, we can determine
  $\Q\oplus\Q\oplus\Q\oplus\Q$.
  For the representation $T$ in Proposition \ref{repD},
  it is easy to see that $T(\Q\mathcal{D})=\M_2(\Q)$.
  Thus (1) holds.

  Since $\varphi_i$ ($i=1,2,3,4$) have rational values on $\mathcal{Q}$, we can determine
  $\Q\oplus\Q\oplus\Q\oplus\Q$.
  For the representation $T'$ in Proposition \ref{repQ}, the set consisting of 
  \begin{eqnarray*}
  T'(\s_1)&=&\left(\begin{array}{rr}1&0\\0&1\end{array}\right),\quad 
  T'(\s_x)=\left(\begin{array}{rr}\sqrt{-1}&0\\0&-\sqrt{-1}\end{array}\right),\\
  T'(\tau_y)&=&\left(\begin{array}{rr}0&-1\\a&0\end{array}\right),\quad 
  T'(\tau_{xy})=\left(\begin{array}{rr}0&-\sqrt{-1}\\-a\sqrt{-1}&0\end{array}\right)
  \end{eqnarray*}
  is a $\Q$-basis of $T'(\Q\mathcal{Q})$.
  We can see that
  \begin{eqnarray*}
    && T'(\s_x)^2=-T'(\s_1), \quad T'(\tau_y)^2=-aT'(\s_1),\\
    && T'(\s_x)T'(\tau_{y})=-T'(\tau_{y})T'(\s_x)=T'(\tau_{xy}).
  \end{eqnarray*}
  This shows that $T'(\Q\mathcal{Q})\cong \Q(-1,-a)$ and (2) holds.
\end{proof}

\section{Remark}
Our pair $(\mathcal{D}, \mathcal{Q})$ has similar properties to the pair $(D_8, Q_8)$,
the dihedral group and quaternion group of order $8$.
Our association schemes have order $4n$ and $n\equiv 0\pmod{4}$,
and thus $(D_8,Q_8)$ is not obtained by our construction.
If we set $a=1$ and $A_1=A_2=O$ and apply our construction,
then we can construct the pair $(D_8,Q_8)$ and all arguments are valid for it.

\bibliographystyle{amsplain}
\providecommand{\bysame}{\leavevmode\hbox to3em{\hrulefill}\thinspace}
\providecommand{\MR}{\relax\ifhmode\unskip\space\fi MR }
\providecommand{\MRhref}[2]{%
  \href{http://www.ams.org/mathscinet-getitem?mr=#1}{#2}
}
\providecommand{\href}[2]{#2}

\end{document}